\documentclass[a4paper, 11pt]{amsart}

%
%

\usepackage[T1]{fontenc}
\usepackage[linktocpage]{hyperref}
	\hypersetup{colorlinks=true,linkcolor=blue,citecolor=blue,filecolor=magenta,urlcolor=blue}      
\usepackage{geometry}       
	\geometry{centering}   
	\geometry{top=3cm,bottom=3cm,left=2.5cm,right=2.5cm}
	\linespread{1.3}
\usepackage[english]{babel}  
\usepackage{amsmath,amsfonts,amssymb,amsthm}

%
%

\newtheorem{theorem}{Theorem}[section]
\newtheorem{proposition}[theorem]{Proposition}
\newtheorem{lemma}[theorem]{Lemma}

\theoremstyle{definition}
\newtheorem{definition}[theorem]{Definition}
\newtheorem{example}[theorem]{Example}
\newtheorem{algorithm}[theorem]{Algorithm}
\newtheorem{problem}[theorem]{Problem}

\theoremstyle{remark}
\newtheorem{remark}[theorem]{Remark}

%
%

\newcommand{\CC}{\mathbb{C}}
\newcommand{\ZZ}{\mathbb{Z}}
\newcommand{\QQ}{\mathbb{Q}}
\newcommand{\FF}{\mathbb{F}}
\newcommand{\PP}{\mathbb{P}}
\newcommand{\A}{\mathcal{A}}
\renewcommand{\P}{\mathcal{P}}
\newcommand{\C}{\mathcal{C}}
\newcommand{\M}{\mathcal{M}}
\renewcommand{\L}{\mathcal{L}}
\newcommand{\ML}{\mathcal{ML}}
\newcommand{\FS}{\mathcal{FS}}

\DeclareMathOperator{\sing}{Sing}
\DeclareMathOperator{\Gal}{Gal}
\DeclareMathOperator{\PGL}{PGL}

%
%

\begin{document}

\title[Nonconnectedness of the moduli space, II: nonarithmetic pairs]{On the nonconnectedness of moduli spaces of arrangements,~II: construction of nonarithmetic pairs}

\author[B. Guerville-Ballé]{Beno{\^i}t Guerville-Ball{\'e}}
	\address{}
	\email{benoit.guerville-balle@math.cnrs.fr}

\thanks{}

\subjclass[2020]{
	51M15, 
	14N10, 
	14N20, 
	14H10, 
	51A45. 
}

\begin{abstract}
	Constructing lattice isomorphic line arrangements that are not lattice isotopic is a complex yet fundamental task. In this paper, we focus on such pairs but which are not Galois conjugated, referred to as nonarithmetic pairs. Splitting polygons have been introduced by the author to facilitate the construction of lattice isomorphic arrangements that are not lattice isotopic. Exploiting this structure, we develop two algorithms which produce nonarithmetic pairs: the first generates pairs over a number field, while the second yields pairs over the rationals. Moreover, explicit applications of these algorithms are presented, including one complex, one real, and one rational nonarithmetic pair.
\end{abstract}

\maketitle

\section{Introduction}

The concept of moduli space plays an important role in various branches of mathematics, serving as a fundamental framework for understanding families of geometric objects. Moduli spaces provide a structured way to classify objects up to isomorphism, capturing the intrinsic parameters that define their equivalence classes. Understanding their structure and behavior is crucial, not only from the theoretical persepective but also for applications in algebraic geometry, topology, or mathematical physics. However, the study of moduli spaces is a challenging task due to their complex nature. Indeed, Mn\"ev Universality Theorem~\cite{Mnev:Universality} and its numerous extensions by Vakil~\cite{Vak:Murphy_law}, imply that, in lots of cases, moduli spaces can behave as badly as one can imagine.

\medskip

The \emph{moduli space} $\M(\A)$ of a line arrangement $\A$ in $\CC\PP^2$ can be defined as the set of all arrangements that are lattice isomorphic to $\A$, quotiented by the natural action of $\PGL_3(\CC)$. MacLane~\cite{MacLane} showed that such moduli spaces are not necessarily path-connected, while Mn\"ev~\cite{Mnev:Universality} proved that any singularity defined over $\ZZ$ appears in at least one moduli space, revealing the complex nature mentioned above. Randell's Isotopy Theorem~\cite{Ran:isotopy} states that two path-connected arrangements are topologically equivalent, illustrating the importance of understanding the topology of such moduli spaces. The moduli spaces of line arrangements with few lines have been classified from different perspectives. The works of Nazir and Yoshinaga~\cite{NazYos} and of Fei~\cite{Fei} classify the topological types of arrangements up to 9 lines. Recently, Corey and Luber classified the diffeomorphic types of line arrangements with up to $12$ lines, see~\cite{Cor:Grassmannians, CorLub:Grassmannian, CL:singular_moduli}. The interplay between connectedness and combinatorics has been studied by the author and Viu-Sos in~\cite{Gue:splitting_polygons, GBVS:connectedness}. The present paper is a continuation of these studies.

\subsection{Arithmetic, nonarithemtic and rational pairs}

Our study focuses on the connected components of the moduli spaces, so we can assume that any arrangement is defined over a number field. Indeed, due to the algebraic nature of the moduli space of a fixed arrangement $\A$, any arrangement in $\M(\A)$ is lattice isotopic to an arrangement defined over a number field. The number field generated by all the coefficients of the lines of $\A$ is called the \emph{definition field} of $\A$ and is denoted by $\FF(\A)$. Throughout this paper, a \emph{pair} of arrangements always refers to lattice isomorphic arrangements which are not isotopy equivalent. In other words, they are in different connected components of their shared moduli space.

\begin{definition}\label{def:pairs}
	Let $\A_1$ and $\A_2$ be a pair of arrangements, and let $\FF$ be a number field that contains both $\FF(\A_1)$ and $\FF(\A_2)$.
	\begin{itemize}
		\item The pair is \emph{arithmetic} if there exists a Galois automorphism $\sigma\in\Gal(\FF/\QQ)$ such that $\sigma \cdot \A_1 = \A_2$. It is \emph{nonarithmetic} otherwise.
		\item The pair is \emph{rational} if $\FF(\A_1)=\FF(\A_2)=\QQ$.
	\end{itemize}
\end{definition}

\begin{remark}
	Obviously, a rational pair is a nonarithmetic pair.
\end{remark}

\begin{example}\label{ex:pairs}\mbox{}
	\begin{enumerate}
		\item The MacLane arrangements~\cite{MacLane} or the Nazir-Yoshinaga arrangements~\cite[Example~5.3]{NazYos} form arithmetic pairs where the Galois automorphism $\sigma$ is the complex conjugation.
		\item The Rybnikov arrangements~\cite[Section~4]{Ryb} form a complex nonarithmetic pair.
		\item The arrangements $\A^{1,1}$ and $\A^{1,-1}$ in~\cite[Section~2.2]{GBVS:configurations} form a rational pair.
	\end{enumerate}
\end{example}

\subsection{Arithmetic pairs}

Due to the algebraic nature of the moduli space, arithmetic pairs are simple to construct, e.g.~\cite{MacLane, NazYos, ACCM, Gue:LLN}. Furthermore, they are a particular case of conjugate varieties. It is known that such varieties can have different topologies, see~\cite{Abelson,Serre} for the general case and~\cite{ACCM} for line arrangements --see also~\cite{Gue:ZP,Cad:PhD} for numerous other examples. Nevertheless, both the topology and the geometry of such pairs are difficult to distinguish. Indeed, any algebraic topological invariant will fail to distinguish a topological difference. In particular, the profinite completion of the fundamental group of their complements are isomorphic. Similarly, the geometric line operator $\Lambda_{\mathfrak{m},\mathfrak{n}}$ introduced by Rouleau in~\cite{Rou2,Rou1} cannot distinguish a difference of geometry in such pairs.

\subsection{Nonarithmetic pairs}

There is a wide class of problems in the study of arrangements and, more generally, of plane curves, asking whether a given property --be it geometric, topological, or otherwise-- is determined by the local type of the singularities or not. To answer negatively such a question, the most efficient way is to provide an explicit counterexample. When considering two arrangements, the more different they are, the more likely they are to form a counterexample. From this observation and according to the previous paragraph, it is natural, in a first place, to consider arrangements that form a nonarithmetic pair. It is not a coincidence if the first examples of Zariski pair\footnote{A Zariski pair is form by two lattice isomorphic arrangements that have nonhomeomorphic embedded topology.} for curves~\cite{Zar1,Zar2} or for arrangements~\cite{Ryb} are both nonarithmetics. However, one of the challenges remains in constructing such nonarithmetic pairs.

In~\cite{Gue:splitting_polygons}, the author introduced the combinatorial notion of the \emph{splitting polygon pattern}. It is a sub-structure of a line combinatorics $\C$ suggesting that the moduli space $\M(\C)$ is not connected. This structure is present in all the examples given in Example~\ref{ex:pairs}. It has been used in~\cite{Gue:splitting_polygons}, to produce several examples of arithmetic Zariski pairs. The purpose of the current paper is to present and illustrate two algorithms for constructing nonarithmetic pairs using the splitting polygon structure. The first algorithm produces nonarithmetic pairs over a number field, while the second generates rational pairs.

\subsection{Structure of the paper}

In Section~\ref{sec:splitting_and_algo}, we recall the definition of the splitting polygon structure. Then, we describe the two algorithms (Algorithm~\ref{algo:non_arithmetic} and~\ref{algo:rational}) which produce nonarithemtic pairs over a number field and rational pairs, respectively. Section~\ref{sec:applications} is dedicated to applications of these algorithms with the production of explicit examples (Theorems~\ref{thm:complex_nonarithmetic_pair}, \ref{thm:real_nonarithmetic_pair} and~\ref{thm:rational_nonarithmetic_pair}). To conclude, in Section~\ref{sec:discussion}, we discuss the limitations of the algorithms and some possible improvments, question the existence of nonarithmetic pairs with less than $13$ lines (Problem~\ref{pb:nonarithmetic}), and wonder if there is a topological difference in the examples procuded in Section~\ref{sec:applications} (Problem~\ref{pb:ZP}).

\section{Splitting polygon and algorithms}\label{sec:splitting_and_algo}

In order to state the algorithms, one needs to recall the definition and some properties of the splitting polygon structure introduced in~\cite{Gue:splitting_polygons}.

\medskip

As originally defined in~\cite{ACCM}, a \emph{line combinatorics} is the data of a finite set $\L$ and a subset $\P$ of the powerset of $\L$ such that:
\begin{itemize}
	\item for all $P\in\P$, one has $|P|\geq 2$,
	\item for all $i,j\in\P$, with $i\neq j$, there exists a unique $P\in\P$ such that $\{i,j\}\subset P$.
\end{itemize}

\medskip

Let $\A$ be a line arrangement of $\CC\PP^2$, and let $\sing(\A)$ be the set of singular points of $\A$, where each singular point $P$ is given as the maximal subset of $\A$ formed by the lines of $\A$ passing through $P$. Under such a convention, $\big(\A,\sing(\A)\big)$ is a line combinatorics, named the combinatorics of $\A$ and denoted by $\C(\A)$ --or simply $\C$ if no confusion is possible.

\subsection{The splitting polygon structure}

\begin{definition}\label{def:plinth}
	A \emph{plinth} $\Psi$ of length $r\geq3$ on a line combinatorics $\C$ is the data of two tuples $(S_1,\dots,S_r)\subset\L$ and $(P_1,\dots,P_r)\subset\P$, named respectively the \emph{support} and the \emph{pivot points}, such that for all pivot points $P_i$, we have $S_i\notin P_i$ and $S_{i+1}\notin P_i$, where the indices are considered modulo $r$.
\end{definition}

Let $\A$ be an arrangement with combinatorics $\C$. The connected component of $\M(\C)$ containing the class of $\A$ is denoted by $\M(C)^{\A}$. If $\Psi$ is a plinth on $\C$ then the geometrical image of $\Psi$ is called the realization of $\Psi$ in $\A$, it will be denoted by $\Psi_{\A}$.

\begin{definition}\label{def:rigid_projective_system}
	A plinth $\Psi$ on a combinatorics $\C$ forms a \emph{rigid projective system} of the connected component $\M(\C)^{\A}$ if, for any $\A'\in\M(\C)^{\A}$, there exsits a projective transformation $\tau$ which sends $\Psi_{\A}$ on~$\Psi_{\A'}$.
\end{definition}

The following proposition is a direct consequence of the definiton.

\begin{proposition}\label{propo:rigid}
	If $\dim_\CC(\M(\C)^\A)=0$, then any plinth $\Psi$ on $\C$ is a rigid projective system of~$\M(\C)^\A$.
\end{proposition}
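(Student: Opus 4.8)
The plan is to observe that a $0$-dimensional connected component of the moduli space is a single point, and then to read the conclusion directly off the definition of $\M(\C)$ as a $\PGL_3(\CC)$-quotient.

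First I would recall that $\M(\C)$ carries the structure of a (quasi-projective) algebraic variety, obtained as the quotient of the realization space of $\C$ by the natural action of $\PGL_3(\CC)$; in particular each of its connected components is again an algebraic variety. A connected component of dimension $\dim_\CC = 0$ is therefore reduced to a single point, so $\M(\C)^\A = \{[\A]\}$. Consequently, for any $\A'\in\M(\C)^\A$ the equality $[\A'] = [\A]$ holds in this one-point set, which by the very definition of the quotient means that there exists $\tau\in\PGL_3(\CC)$ with $\tau\cdot\A = \A'$.

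It then remains to transport the plinth. A plinth $\Psi = \big((S_1,\dots,S_r),(P_1,\dots,P_r)\big)$ is a purely combinatorial datum — a selection of lines $S_i\in\L$ and of multiple points $P_i\in\P$ — so its realization $\Psi_\A$ in $\A$ consists precisely of the corresponding lines and points of $\A$, and likewise $\Psi_{\A'}$ for $\A'$. Since $\tau$ sends the lines of $\A$ to the lines of $\A'$ and the multiple points of $\A$ to the multiple points of $\A'$, preserving all incidences, it sends $\Psi_\A$ onto $\Psi_{\A'}$. This is exactly the condition in Definition~\ref{def:rigid_projective_system}, hence $\Psi$ is a rigid projective system of $\M(\C)^\A$.

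The argument is essentially immediate once the first step is in place; the only (mild) obstacle is precisely that step, namely the need to invoke the algebraic — in particular Noetherian — nature of $\M(\C)$, as recalled in the introduction, to be sure that a $0$-dimensional connected component really is a single point and not, say, an infinite discrete set of projectively inequivalent arrangements.
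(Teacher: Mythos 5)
Your proof is correct and takes the same route the paper intends: the paper gives no written proof beyond calling the proposition ``a direct consequence of the definition,'' and the implicit argument is exactly yours --- a connected component of complex dimension $0$ of the (algebraic) moduli space is a single point, so every $\A'\in\M(\C)^\A$ is of the form $\tau\cdot\A$ for some $\tau\in\PGL_3(\CC)$, and such a $\tau$ carries $\Psi_\A$ onto $\Psi_{\A'}$ since the plinth is combinatorial data preserved by projective transformations respecting the labeling.
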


\begin{definition}\label{def:splitting_polygons}
	Let $\A$ be a line arrangement and $\Psi$ be a plinth on $\C$ with support $(S_1,\dots,S_r)$ and pivot points $(P_1,\dots,P_r)$. A set of $r$ lines $E^\lambda=(E_1^\lambda,\dots,E_r^\lambda)\subset (\check{\CC\PP}^2)^r$ is a \emph{splitting polygon} on the plinth $\Psi$ if, for all $i\in\{1,\dots,r\}$, it verifies the following conditions --where all the indices are considered modulo $r$.\footnote{In the particular case $r=3$, we use the term \emph{splitting triangle}.}
	\begin{enumerate}
		\item $E_i^\lambda \cap E_{i+1}^\lambda \cap S_i \neq \emptyset$,
		\item $E_i$ passes through the pivot point $P_i$,
		\item all the other intersections are generic.
	\end{enumerate}
	If $E^\lambda$ is a splitting polygon on $\Psi$, we denote by $\A_\Psi^\lambda$ the union of $\A$ and $E^\lambda$, and by $\C_\Psi$ it combinatorics.
\end{definition}

A set of lines $E^\lambda$ is a \emph{nonsplitting polygon} if it verifies all the conditions of the splitting polygon but fails to verify a unique condition~(1). The main result of~\cite{Gue:splitting_polygons} is the following.

\begin{theorem}[\cite{Gue:splitting_polygons}]\label{thm:splitting_polygons}
	Let $\A$ be a line arrangment with combinatorics $\C$, and let $\Psi$ be a rigid projective system of $\M(\C)^\A$. If it exists two distinct splitting polygons $E^{\lambda_1}$ and $E^{\lambda_2}$, and a nonsplitting polygon then $\A_\Psi^{\lambda_1}$ and $\A_\Psi^{\lambda_2}$ are in distinct connected components of $\M(\C_\Psi)$.
\end{theorem}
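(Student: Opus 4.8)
The plan is to exploit the rigidity hypothesis on the plinth $\Psi$ together with an invariant attached to a splitting polygon that detects the combinatorial ``twisting'' of its realization. First I would set up the framework: suppose, for contradiction, that $\A_\Psi^{\lambda_1}$ and $\A_\Psi^{\lambda_2}$ lie in the same connected component of $\M(\C_\Psi)$. By Randell's Isotopy Theorem (or rather the underlying lattice-isotopy statement), a path between them in $\M(\C_\Psi)$ yields a continuous family $t\mapsto \A_\Psi^{\lambda_t}$ of realizations of $\C_\Psi$. Forgetting the lines $E^{\lambda_t}$, we obtain a path in $\M(\C)^\A$ connecting the underlying copy of $\A$ to itself, and since $\Psi$ is a rigid projective system of $\M(\C)^\A$, after composing with the appropriate projective transformations $\tau_t$ we may assume the realization $\Psi_{\A}$ of the plinth stays \emph{fixed} along the whole path. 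Thus the fixed data are the pivot points $P_1,\dots,P_r$ and the support lines $S_1,\dots,S_r$, and the moving data are the $r$ lines $E_i^{\lambda_t}$, each constrained to pass through $P_i$ (condition (2)) and to meet $E_{i+1}^{\lambda_t}$ on $S_i$ (condition (1)), all other incidences staying generic (condition (3)).

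The next step is to analyze this constrained configuration locally. Fix the index $i$. The line $E_i$ runs through the pencil of lines through $P_i$, so it is parametrized by a single parameter, say the point $x_i = E_i \cap S_i \in S_i \cong \PP^1$ (here I use that $S_i \notin P_i$, which is exactly part of the plinth axiom, so this intersection is a well-defined coordinate). Condition (1) at the pivot $P_i$ says $E_i$ and $E_{i+1}$ meet on $S_i$, hence $E_{i+1} \cap S_i = x_i$ as well; but $E_{i+1}$ also passes through $P_{i+1}$, so $E_{i+1}$ is the line joining $P_{i+1}$ to $x_i$, and therefore $x_{i+1} = E_{i+1}\cap S_{i+1}$ is obtained from $x_i$ by the perspectivity $S_i \to S_{i+1}$ with center $P_{i+1}$. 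Composing these $r$ perspectivities around the polygon gives a single projectivity $h = h_\Psi \colon S_1 \to S_1$, built entirely from the fixed plinth data, and a splitting polygon on $\Psi$ is precisely the choice of a fixed point $x_1$ of $h$ (plus the genericity condition (3), which is open and so stable along a path). A projectivity of $\PP^1$ has at most two fixed points unless it is the identity; the hypothesis that there exist two distinct splitting polygons and a nonsplitting polygon guarantees $h$ has exactly two fixed points $x_1^{\lambda_1} \neq x_1^{\lambda_2}$ and is not the identity. Along the path the parameter $x_1(t)$ depends continuously on $t$, stays a fixed point of the (constant!) projectivity $h$, hence lies in the finite set $\{x_1^{\lambda_1}, x_1^{\lambda_2}\}$, and by connectedness of the parameter interval it is constant. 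This forces $\A_\Psi^{\lambda_1} = \A_\Psi^{\lambda_2}$ up to projective transformation, i.e.\ they were the same point of $\M(\C_\Psi)$ to begin with — contradiction.

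There are two gaps to fill carefully. The first, and the one I expect to be the genuine obstacle, is justifying that the plinth can be held fixed along the entire path: the rigidity definition only provides, for each $\A' \in \M(\C)^\A$, \emph{some} projective transformation sending $\Psi_\A$ to $\Psi_{\A'}$, so I need to check this can be done continuously and compatibly in families — essentially that the map ``realization $\mapsto$ position of $\Psi$'' gives a continuous section, which should follow from the fact that $\mathrm{PGL}_3$ acts and the stabilizer behaves well, but the details of local triviality of this action deserve attention. The second gap is the genericity bookkeeping: one must confirm that the ``all other intersections are generic'' condition is preserved, which is clear since it is an open condition and both endpoints satisfy it, but one should note that this is precisely why the nonsplitting polygon is invoked — it witnesses that the third (degenerate) behaviour, were $h$ the identity, is ruled out, so that the fixed-point set is genuinely the finite set $\{x_1^{\lambda_1},x_1^{\lambda_2}\}$. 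Once these are in place the argument is essentially the discreteness-of-fixed-points observation above; this is exactly the content of Theorem~\ref{thm:splitting_polygons} and matches the structure announced in~\cite{Gue:splitting_polygons}.
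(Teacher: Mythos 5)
First, a point of comparison: the paper does not prove Theorem~\ref{thm:splitting_polygons} at all --- it is imported from \cite{Gue:splitting_polygons} --- so your argument can only be judged on its own merits. The geometric mechanism you identify is the right one and matches the way $\Delta_\Psi$ is built in Section~\ref{sec:polynomial_Delta}: the polygon is determined by one point of $S_1$, closure of the polygon is the fixed-point equation of the projectivity $h$ of $S_1$ obtained by composing the perspectivities centered at the pivots, two distinct splitting polygons give two distinct fixed points, and the nonsplitting polygon certifies $h\neq\mathrm{id}$, so the fixed locus is finite; rigidity of $\Psi$ makes $h$ projectively constant along a path. Moreover, the first gap you flag (continuity of the normalizing transformations $\tau_t$) is real but closable by a standard argument: $t\mapsto\Psi_{\A_t}$ is a path in the $\PGL_3$-orbit of $\Psi_\A$, which is $\PGL_3/\Gamma$ for $\Gamma$ the stabilizer of the labelled realization $\Psi_\A$, and the quotient map is a fiber bundle, hence has path lifting; so a continuous choice with $\tau_0=\mathrm{id}$ exists.

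The genuine gap is in the endpoint bookkeeping, and it is exactly where the lifting ambiguity bites. The lifted path only gives $\tau_1=g\gamma$ with $\gamma\in\Gamma$, where $g$ is the projectivity relating the end of the lifted path to $\A_\Psi^{\lambda_2}$; you cannot arrange $\tau_1=g$. Any $\gamma\in\Gamma$ preserves each $S_i$ and each $P_i$, hence conjugates every perspectivity to itself and so commutes with $h$ on $S_1$; consequently $\gamma$ either fixes or \emph{swaps} the two fixed points $x^{\lambda_1},x^{\lambda_2}$. Your constancy argument therefore only yields $\gamma^{-1}\bigl(x^{\lambda_2}\bigr)=x^{\lambda_1}$, which is no contradiction if some automorphism of the plinth realization exchanges the two fixed points; to close the proof you must either show no element of $\Gamma$ can perform this swap (not true for an arbitrary plinth) or normalize using more of $\A$ than the plinth alone. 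Relatedly, your final sentence is not a contradiction as stated: the hypotheses assert that the two splitting polygons are distinct, not that $\A_\Psi^{\lambda_1}$ and $\A_\Psi^{\lambda_2}$ are distinct points of $\M(\C_\Psi)$ (that is part of what the theorem concludes), so deriving ``they were the same point of the moduli space'' proves nothing by itself; the argument must end with $x^{\lambda_1}=x^{\lambda_2}$, i.e.\ $\lambda_1=\lambda_2$, and that is precisely the step obstructed by the stabilizer ambiguity above.
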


\subsection{The polynomial $\Delta_\Psi$}\label{sec:polynomial_Delta}

The existence of such splitting polygons and nonsplitting polygon is related with the degree and the roots of a polynomial $\Delta_\Psi$ ensuring the realization or not of condition~(1) in Definition~\ref{def:splitting_polygons}. This polynomial can be constructed as follows.

Let $Q_1^\lambda$ be a generic point of the line $S_1$ which is different from $S_1 \cap S_2$. It can be linearly parametrized by a unique parameter $\lambda \in \CC$. By Definition~\ref{def:plinth}, $S_i\notin P_i$, so one can define $E_1^\lambda$ as the line passing through $Q_1^\lambda$ and $P_1$. We denote by $Q_2^\lambda$ the intersection point of $E_1^\lambda$ and $S_2$. This point is well defined due to the condition $S_{i+1}\notin P_i$ in Definition~\ref{def:plinth}. Similarly, we construct iteratively the lines $E_i^\lambda$ and the points $Q_{i+1}^\lambda$, for $i\in\{2,\dots,r\}$. We denote by $\Delta_\Psi$ the determinant of the $3\times3$ matrix formed by the coefficients of lines $S_1$, $E_1^\lambda$ and $E_r^{\lambda}$. If $\FF(\A)$ is a definition field of $\A$, then $\Delta_\Psi$ is an element of the polynomial ring $\FF(\A)[\lambda]$. By construction, $\Delta_\Psi$ is of degree at most~$2$.

\medskip

Assume that $\Delta_\Psi$ is of degree~$2$. If it is also irreducible in $\FF(\A)[\lambda]$, then all the conditions of Theorem~\ref{thm:splitting_polygons} are verified, see~\cite[Theorem~2.8]{Gue:splitting_polygons}. Such splitting polygons will be called \emph{irreducible splitting polygons} and \emph{reduced splitting polygons} otherwise. In such situation, the created pair will be an arithmetic one. To create a nonarithmetic pair, we will search for reduced splitting polygons. It is worth to notice that the condition ``$\Delta_\Psi$ is reducible in $\FF(\A)$'' is necessary but not sufficient to create nonarithemtic pair. Indeed, in that case, one also needs to verify Condition~3 of Definition~\ref{def:splitting_polygons}

\begin{lemma}\label{lem:reduced_polynomial}
	If the polynomial $\Delta_{\Psi}$ is reducible then there exists a nonsplitting polygon on $\Psi$.
\end{lemma}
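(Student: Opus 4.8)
The plan is to exploit the fact that a nonsplitting polygon is, by definition, a tuple of lines satisfying conditions (2) and (3) of Definition~\ref{def:splitting_polygons} together with a "degenerate" version of condition~(1): exactly one of the incidences $E_i^\lambda\cap E_{i+1}^\lambda\cap S_i\neq\emptyset$ fails. The construction of $\Delta_\Psi$ in Section~\ref{sec:polynomial_Delta} produces, for each parameter $\lambda\in\CC$, a candidate tuple $E^\lambda=(E_1^\lambda,\dots,E_r^\lambda)$ built so that incidences (1) hold automatically for $i=1,\dots,r-1$ by the iterative construction of the $Q_i^\lambda$, that condition~(2) holds by construction (each $E_i^\lambda$ passes through $P_i$), and so that the \emph{last} incidence $E_r^\lambda\cap E_1^\lambda\cap S_1\neq\emptyset$ holds precisely when $\Delta_\Psi(\lambda)=0$. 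Thus the roots of $\Delta_\Psi$ are exactly the parameters giving an honest splitting polygon, and I want to locate a value of $\lambda$ that yields a nonsplitting one instead.

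First I would observe that, since $\Delta_\Psi$ is reducible of degree at most $2$ (it has degree $\le 2$ by the last sentence of Section~\ref{sec:polynomial_Delta}), reducibility over $\FF(\A)$ forces $\Delta_\Psi$ to have at least one root $\lambda_0\in\FF(\A)\subset\CC$; indeed a reducible polynomial of degree $1$ or $2$ over a field splits into linear factors over that field, and if $\Delta_\Psi$ were identically zero then every $\lambda$ would be a root and the statement would be immediate (or one treats that degenerate case separately, observing it does not arise under the standing assumption that $\Delta_\Psi$ has degree $2$). Fix such a root $\lambda_0$ and, separately, pick a value $\lambda_1\in\CC$ with $\Delta_\Psi(\lambda_1)\neq0$ (possible since $\Delta_\Psi$ is not identically zero); one may even take $\lambda_1\in\FF(\A)$, or in a small field extension, but that refinement is not needed here. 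The tuple $E^{\lambda_1}$ obtained from the iterative construction then satisfies incidences (1) for $i=1,\dots,r-1$, satisfies (2), but fails the single incidence at $i=r$ because $\Delta_\Psi(\lambda_1)\neq0$.

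The remaining point — and this is the only real content — is to check that $E^{\lambda_1}$ satisfies condition~(3), i.e. that all intersections other than the prescribed ones are generic. Here I would argue by genericity: the set of $\lambda$ for which some unintended coincidence occurs (an $E_i^\lambda$ passing through an unintended point of $\A$, two of the $E_i^\lambda$ meeting on a line of $\A$ when they should not, three of them becoming concurrent, etc.) is a \emph{proper} Zariski-closed subset of the $\lambda$-line, provided it is proper at all — and it is proper because a generic realization $\A$ and a generic choice of the free point $Q_1^\lambda$ do produce a genuine splitting polygon for $\lambda$ a root of $\Delta_\Psi$ (this is exactly the setup of~\cite[Theorem~2.8]{Gue:splitting_polygons}), so condition~(3) holds for at least one value of $\lambda$, hence for all but finitely many. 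Therefore one can choose $\lambda_1$ simultaneously avoiding the finitely many bad values coming from (3) and avoiding the root(s) of $\Delta_\Psi$; the resulting $E^{\lambda_1}$ is then a nonsplitting polygon on $\Psi$.

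The main obstacle I anticipate is the bookkeeping in the genericity argument for condition~(3): one must be sure that "generic intersection" is genuinely an open condition in $\lambda$ and that the open set is nonempty, which ultimately relies on the hypothesis (implicit from the running discussion and from Theorem~\ref{thm:splitting_polygons} being invoked) that $\A$ together with $\Psi$ does admit splitting polygons at the roots of $\Delta_\Psi$ — in other words, that the configuration is not so degenerate that condition~(3) fails identically. Once that is granted, the proof is just "avoid finitely many bad values of $\lambda$," and the rest is routine.
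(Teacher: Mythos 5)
Your overall strategy is the same as the paper's: the construction of Section~\ref{sec:polynomial_Delta} makes conditions (1) for $i<r$ and condition (2) automatic, so a nonsplitting polygon is obtained by choosing a parameter with $\Delta_\Psi(\lambda)\neq 0$ while avoiding the values where condition (3) of Definition~\ref{def:splitting_polygons} fails. The gap is exactly in the step you flag as the ``only real content'': the properness of the bad locus for condition (3). You justify it by asserting that condition (3) holds for at least one $\lambda$ --- namely at a root of $\Delta_\Psi$, for ``a generic realization $\A$ and a generic choice of $Q_1^\lambda$'', invoking the setup of \cite[Theorem~2.8]{Gue:splitting_polygons}. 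None of this is available: $\A$ is a fixed (typically rigid, hence special) arrangement, not a generic one; Theorem~2.8 of that reference concerns the \emph{irreducible} case; and the paper stresses, immediately before the lemma, that reducibility of $\Delta_\Psi$ is necessary but \emph{not} sufficient, i.e.\ condition (3) at the roots may fail and must be checked separately (this is Step~(7) of Algorithm~\ref{algo:non_arithmetic}). The lemma has to deliver the nonsplitting polygon without assuming that any splitting polygon exists, so your properness argument is circular at the decisive point.

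The paper closes this gap structurally rather than by pointing to a known good value of $\lambda$: each failure of genericity is the condition that a line $E_i^\lambda$, which moves linearly with $\lambda$ in the pencil through the pivot $P_i$ (nonconstantly, thanks to the plinth conditions $S_i\notin P_i$ and $S_{i+1}\notin P_i$), passes through a point independent of $\lambda$. Each such condition is a nontrivial linear condition on $\lambda$, hence cuts out a proper Zariski-closed (finite) subset of the $\lambda$-line; removing these finitely many values from the nonempty open set $\{\Delta_\Psi\neq 0\}$ (nonempty because reducibility of a polynomial of degree at most $2$ forces degree exactly $2$, so $\Delta_\Psi\not\equiv 0$) yields the desired $\lambda_0$. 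A minor additional slip: your aside that the case $\Delta_\Psi\equiv 0$ would make the statement ``immediate'' is backwards --- if $\Delta_\Psi\equiv 0$ the closing incidence (1) never fails, so this construction produces no nonsplitting polygon at all; the correct observation, as in the paper, is simply that this case is excluded by reducibility.
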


\begin{proof}
	Since $\Delta_\Psi$ is of degree at most~$2$, the hypothesis that it is reducible induces that it is of degree exactly~$2$. The conditions for the tuple $E^{\lambda_0}$ to form a nonsplitting polygon are: $\Delta_\Psi(\lambda_0) \neq 0$ and all the other intersections are generics. The first condition is Zariski open, while the second kind are linear Zariski-closed conditions. Indeed these conditions correspond to a line, linearly parametrized by $\lambda$, that passes through a point independent of $\lambda$. Since the intersection of an Zariski-open subset with finitely many proper Zariski-closed subsets is not empty then such $\lambda_0$ exists.
\end{proof}

\subsection{The algorithms}

The first example that comes in mind when we talk about nonarithmetic pairs is the Rybnikov arrangements. We have shown in~\cite[Section.~4.1]{Gue:splitting_polygons} that they can be constructed by adding successively two splitting triangles such that the first one is irreducible and the second one is reducible. Taking inspiration from this construction, nonarithmetic pairs can be constructed using the following algorithm.

\begin{algorithm}\label{algo:non_arithmetic}
	Construction of nonarithmetic pair over a number field.
	\begin{enumerate}
		\item Consider a rigid arrangement $\A$ with definition field $\FF(\A)$ (eventually $\FF(\A)=\QQ$).
		\item Enumerate all the plinths of $\C(\A)$ with a fixed length.
		\item For each plinth $\Psi_1$ of $\C(\A)$: compute the polynomial $\Delta_{\Psi_1}$ and determine its irreducibility in $\FF(\A)[\lambda]$.
		\item If the polynomial $\Delta_{\Psi_1}$ is irreducible over $\FF(\A)$, then construct the arithmetic pair $\A_{\Psi_1}^1$ and $\A_{\Psi_1}^2$ by adding to $\A$ a splitting polygon on the plinth $\Psi_1$.
		\item Enumerate all the plinths of $\C(\A^1_{\Psi_1})=\C(\A^2_{\Psi_1})$ with a fixed lenght.
		\item Fix $i\in\{1,2\}$ and for each plinth $\Psi_2$ on $\C(\A^i_{\Psi_1})$: compute the polynomial $\Delta_{\Psi_2}$ and determine its irreducibility in $\FF(\A^i_{\Psi_1})[\lambda]$.
		\item If the polynomial $\Delta_{\Psi_2}$ is reducible in $\FF(\A_{\Psi_1}^i)[\lambda]$, verify if, for each roots $\lambda_j$ of $\Delta_{\Psi_2}$, the tuple of lines $E^{\lambda_j} = (E_1^{\lambda_j},\dots,E_r^{\lambda_j})$ form a splitting polygon on the plinth~$\Psi_2$.
		\item The arrangements $\A_{\Psi_1,\Psi_2}^{i,1}$ and  $\A_{\Psi_1,\Psi_2}^{i,2}$ obtained by adding to $\A_{\Psi_1}^i$ a splitting polygon on the plinth $\Psi_2$ form a nonarithmetic pair.
	\end{enumerate}
\end{algorithm}

\begin{proof}
	By construction, the definition field $\FF(\A_{\Psi_1}^1)=\FF(\A_{\Psi_1}^2)$ is a quadratic extension of $\FF(\A)$. We denote by $\sigma$ the generator of the associated Galois group. The automorphism $\sigma$ fixes line-by-line the arrangement $\A$, so one has $\sigma \cdot \A_{\Psi_1}^1 = \A_{\Psi_1}^2$.\footnote{Let $\sigma$ be a Galois automorphism of a number field $\FF$. If a line $L:ax+by+cz=0$ is defined over $\FF$, then we denote $\sigma\cdot L$ the line defined by the equation $\sigma(a)x + \sigma(b)y + \sigma(c)z = 0$. The arrangement $\sigma\cdot\A$ is defined as $\{ \sigma\cdot L \mid L\in\A \}$.} Since $\A$ is rigid then Proposition~\ref{propo:rigid} implies that $\Psi_1$ is a rigid projective system of $\M(\A)^\A$, so by Theorem~\ref{thm:splitting_polygons} the arrangements $\A_{\Psi_1}^1$ and $\A_{\Psi_1}^2$ form an arithmetic pair. Furthermore, by construction these arrangements are also rigid.

	Assume that $i=1$ (the case $i=2$ is similar). As noted above, the arrangement $\A_{\Psi_1}^1$ is rigid so, by Proposition~\ref{propo:rigid}, $\Psi_2$ is a rigid projective system of $\M(\A_{\Psi_1}^1)^{\A_{\Psi_1}^1}$. Additionally, since the polynomial $\Delta_{\Psi_2}$ is reducible, then Lemma~\ref{lem:reduced_polynomial} implies that a nonsplitting polynomial exists, and that $\Delta_{\Psi_2}$ is of degree $2$. We denote by $\lambda_1$ and $\lambda_2$ its two roots. Since $E^{\lambda_1}$ and $E^{\lambda_2}$ form splitting polygons on the plinth $\Psi_2$, then, by Theorem~\ref{thm:splitting_polygons}, the arrangements $\A_{\Psi_1,\Psi_2}^{1,1}$ and  $\A_{\Psi_1,\Psi_2}^{1,2}$ form a pair.

	Since $\Delta_{\Psi_2}$ is reducible in $\FF(\A_{\Psi_1}^1)[\lambda]$ then $\FF(\A_{\Psi_1,\Psi_2}^{1,i})=\FF(\A_{\Psi_1}^1)$. Additionally, since $\sigma \cdot \A_{\Psi_1}^1 = \A_{\Psi_1}^2$, then $\sigma \cdot \A_{\Psi_1,\Psi_2}^{1,1} \neq \A_{\Psi_1,\Psi_2}^{1,2}$. So $\A_{\Psi_1,\Psi_2}^{1,1}$ and  $\A_{\Psi_1,\Psi_2}^{1,2}$ do not form an arithmetic pair.
\end{proof}

\begin{remark}
	It is also possible to consider a given arithmetic pair and to apply Algorithm~\ref{algo:non_arithmetic} from Step~(5).
\end{remark}

\begin{remark}\label{rmk:no_reduced_plinth}
	At Step~(3) (resp. Step~(6)), if there is no plinth with an irreducible (resp. reducible) splitting polynomial then one can add a line to $\A$ (resp. $\A_{\Psi_1}^i$'s) passing through at least $2$ singular points. Then we restart the algorithm at Step~(2) (resp. Step~(5)). The additional line will neither modify the definition field nor the rigidity of the considered arrangements.
\end{remark}

One can modify the previous algorithm to produce rational pairs. The proof is similar than for Algorithm~\ref{algo:non_arithmetic}.

\begin{algorithm}\label{algo:rational}
	Construction of rational pair.
	\begin{enumerate}
		\item Consider a rigid arrangement $\A$ defined over $\QQ$.
		\item Enumerate all the plinths of $\C(\A)$.
		\item For each plinth $\Psi$ on $\C(\A)$: compute the polynomial $\Delta_{\Psi}$ and determine its irreducibility.
		\item If the polynomial $\Delta_{\Psi}$ is reducible in $\QQ[\lambda]$, then verify if, for its two roots $\lambda_i$ of $\Delta_{\Psi}$, the tuple of lines $E^{\lambda_i}=(E_1^{\lambda_i},\dots,E_r^{\lambda_i})$ form splitting polygons on the plinth $\Psi$.
		\item The arrangements $\A_\Psi^1$ and $\A_\Psi^2$ form a rational pair.
	\end{enumerate}
\end{algorithm}

\section{Applications of the algorithms}\label{sec:applications}

The first two examples demonstrate applications of Algorithm~\ref{algo:non_arithmetic}, generating nonarithmetic pairs defined over a number field --a complex and a real one. The first example utilizes the MacLane arrangements, and the second uses the Falk-Sturmfels arrangements. The third example is a rational pair produced by Algorithm~\ref{algo:rational}.

\subsection{A complex nonarithmetic pair}

The first example is inspired from Rybnikov's construction but restraining to rigid arrangements. Steps (1)--(4) of Algorithm~\ref{algo:non_arithmetic} correspond to the construction of the MacLane arrangements using the splitting polygon structure. Let us recall the data needed in this construction, for details we refer to~\cite[Section~3.1]{Gue:splitting_polygons}.

Let $\A$ be the rigid arrangement with definition field $\FF(\A)=\QQ$ and defined by
\begin{equation*}
	\begin{array}{l p{1cm} l p{1cm} l}
		L_1: z = 0, && L_2: x = 0, && L_3: x - z = 0, \\
		L_4: y = 0, && L_5: y - z = 0. &&
	\end{array}
\end{equation*}
The plinth $\Psi_1$ is given by the support $(L_1, L_2, L_4)$ and the pivot points $(L_3 \cap L_4, L_3 \cap L_5, L_2 \cap L_5)$. The MacLane arrangements are defined over $\QQ(\zeta_3)$, with $\zeta_3$ a primitive third root of unity. The splitting triangles are given by:
\begin{equation*}
	E_1^{\lambda}: x + \lambda y - z = 0,\quad E_2^{\lambda}: (\lambda - 1)x - \lambda y + z = 0,\quad E_3^{\lambda}: (\lambda - 1)x - y + z = 0,
\end{equation*}
for $\lambda\in\{\zeta_3, \zeta_3+1\}$. The arrangements $\A_{\Psi_1}^i$ will be denoted by $\ML_i$ for simplicity.

From now on, we will only deal with $\ML_1$; a similar work can be done with $\ML_2$. The lines $E_1^{\zeta_3},E_2^{\zeta_3},E_3^{\zeta_3}$ are denoted $L_6,L_7,L_8$ respectively. In Step (6), no plinth $\Psi_2$ produces a reduced polynomial $\Delta_{\Psi_2}$ in $\QQ(\zeta_3)[\lambda]$. So, according to Remark~\ref{rmk:no_reduced_plinth}, we add a line passing through $2$ singular points. Unfortunately, even with such an additional line, there is still no plinth that produces a reduced polynomial $\Delta_{\Psi_2}$. So, we add a second line passing through $2$ singular points. Thus, we consider the lines $L_{9}$ passing through $P_1 = L_2 \cap L_6 \cap L_7$ and $Q_1 = L_3 \cap L_8 $, and $L_{10}$ passing through $P_2 = L_4 \cap L_9$ and $Q_2 = L_3 \cap L_5 \cap L_7$. Their equations are given by:
\begin{equation*}
	L_9: 3x + (2\zeta_3 - 1)y - (\zeta_3 + 1)z = 0, \quad L_{10}: 3x + (\zeta_3 - 2)y - (\zeta_3 + 1)z = 0.
\end{equation*}

Let $\Psi_2$ be the plinth with support $(L_1, L_3, L_4)$ and pivot points $(L_8 \cap L_{10}, L_1 \cap L_{10}, L_5 \cap L_9)$. The polynomial $\Delta_{\Psi_2}$ is $(2\zeta_3 + 2) (\lambda + \frac{\zeta_3-2}{3}) (\lambda + \frac{-\zeta_3+1}{2})$. The splitting polygons associated are:
\begin{align*}
	& 			L^1_{11}: (-2\zeta_3 + 4)x + (2\zeta_3 - 1)y - (\zeta_3/2 + 2)z = 0,
	\quad 	L^1_{12}: (-\zeta_3 + 2)x + (\zeta_3 - 1)y - 1/2z = 0,\\
	& 			L^1_{13}: 6x + (3\zeta_3 - 3)y - (\zeta_3 + 1)z = 0,
\end{align*}
and
\begin{align*}
	&			L^2_{11}: (-\zeta_3 + 2)x + (-\zeta_3 + 1)y - 2z = 0,
	\quad	L^2_{12}: 3x + (\zeta_3 - 2)y + (2\zeta_3 - 4)z = 0, \\
	&			L^2_{13}: (\zeta_3 + 1)x + y - 2z = 0.
\end{align*}

By a direct application of Algorithm~\ref{algo:non_arithmetic}, one has the following.

\begin{theorem}\label{thm:complex_nonarithmetic_pair}
	The arrangements $\ML_1\cup\{L_9,L_{10}\}\cup\{L^1_{11},L^1_{12},L^1_{13}\}$ and $\ML_1\cup\{L_9,L_{10}\}\cup\{L^2_{11},L^2_{12},L^2_{13}\}$ form a complex nonarithmetic pair defined over $\QQ(\zeta_3)$.
\end{theorem}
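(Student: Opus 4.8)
The plan is to verify that the data displayed above meets every hypothesis demanded by Algorithm~\ref{algo:non_arithmetic}, so that the conclusion is exactly the output of its correctness proof. There are four blocks to check: the setup on $\ML_1$, the admissibility of the auxiliary lines $L_9,L_{10}$, the plinth axioms for $\Psi_2$, and the splitting-polygon conditions for the two roots of $\Delta_{\Psi_2}$.

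First I would recall from \cite[Section~3.1]{Gue:splitting_polygons} that $\A$ is rigid with $\FF(\A)=\QQ$, that $\Psi_1$ is a plinth on $\C(\A)$, and that $\Delta_{\Psi_1}=\lambda^2-\lambda+1$, which is irreducible over $\QQ$ with roots $\zeta_3$ and $\zeta_3+1$. By Proposition~\ref{propo:rigid}, $\Psi_1$ is a rigid projective system of $\M(\C)^\A$, so Theorem~\ref{thm:splitting_polygons} gives that $\ML_1=\A^1_{\Psi_1}$ and $\ML_2=\A^2_{\Psi_1}$ form an arithmetic pair with $\FF(\ML_i)=\QQ(\zeta_3)$, exchanged by the generator $\sigma$ of $\Gal(\QQ(\zeta_3)/\QQ)$, and both are rigid. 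Next I would check, by a direct computation over $\QQ(\zeta_3)$, that $L_9$ passes through the triple point $L_2\cap L_6\cap L_7$ and the double point $L_3\cap L_8$ of $\ML_1$, and that $L_{10}$ passes through $L_4\cap L_9$ and the triple point $L_3\cap L_5\cap L_7$; by Remark~\ref{rmk:no_reduced_plinth} this preserves rigidity and leaves the definition field $\QQ(\zeta_3)$ unchanged. I would then verify Definition~\ref{def:plinth} for $\Psi_2$ with support $(L_1,L_3,L_4)$ and pivots $(L_8\cap L_{10},\,L_1\cap L_{10},\,L_5\cap L_9)$, i.e.\ that none of $L_1,L_3$ lies on $L_8\cap L_{10}$, none of $L_3,L_4$ on $L_1\cap L_{10}$, and none of $L_4,L_1$ on $L_5\cap L_9$ --- routine incidence checks. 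Following Section~\ref{sec:polynomial_Delta} I would recompute $\Delta_{\Psi_2}$ and confirm it equals $(2\zeta_3+2)\bigl(\lambda+\tfrac{\zeta_3-2}{3}\bigr)\bigl(\lambda+\tfrac{-\zeta_3+1}{2}\bigr)$, hence has degree $2$ and is reducible in $\QQ(\zeta_3)[\lambda]$, so that Lemma~\ref{lem:reduced_polynomial} supplies a nonsplitting polygon on $\Psi_2$.

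For each root $\lambda_1=\tfrac{2-\zeta_3}{3}$ and $\lambda_2=\tfrac{\zeta_3-1}{2}$ of $\Delta_{\Psi_2}$, conditions~(1) and~(2) of Definition~\ref{def:splitting_polygons} hold automatically --- (2) because the $E_i^\lambda$ are built through the pivot points, and (1) because $\lambda_j$ is a root of $\Delta_{\Psi_2}$ --- so the only remaining point is condition~(3): that the displayed lines $L^j_{11},L^j_{12},L^j_{13}$ (the realizations $E_i^{\lambda_j}$) meet the ten lines of $\ML_1\cup\{L_9,L_{10}\}$ and one another only in the incidences prescribed by $\C_{\Psi_2}$, with no extra coincidence. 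This is the main obstacle: it is a finite but laborious collection of $3\times 3$ determinant evaluations over $\QQ(\zeta_3)$, best organized by listing the singular points of $\C_{\Psi_2}$ and testing each against the twelve added-line/old-line and added-line/added-line pairs. Once this is cleared, both $E^{\lambda_1}$ and $E^{\lambda_2}$ are genuine splitting polygons on the rigid projective system $\Psi_2$ of $\M\bigl(\C(\ML_1\cup\{L_9,L_{10}\})\bigr)^{\ML_1\cup\{L_9,L_{10}\}}$.

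The conclusion then assembles as in the proof of Algorithm~\ref{algo:non_arithmetic}. By Theorem~\ref{thm:splitting_polygons}, $\ML_1\cup\{L_9,L_{10}\}\cup\{L^1_{11},L^1_{12},L^1_{13}\}$ and $\ML_1\cup\{L_9,L_{10}\}\cup\{L^2_{11},L^2_{12},L^2_{13}\}$ lie in distinct connected components of their common moduli space, hence form a pair. Since $\Delta_{\Psi_2}$ is reducible over $\QQ(\zeta_3)$, both arrangements have definition field $\QQ(\zeta_3)$, and because $\sigma\cdot\ML_1=\ML_2\neq\ML_1$ --- the only nontrivial element of $\Gal(\QQ(\zeta_3)/\QQ)$ moves the underlying MacLane part --- no Galois automorphism can carry one arrangement of the pair to the other, so the pair is nonarithmetic. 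As $\QQ(\zeta_3)\not\subset\RR$ and the coefficients genuinely involve $\zeta_3$, it is the asserted complex nonarithmetic pair defined over $\QQ(\zeta_3)$.
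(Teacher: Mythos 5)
Your proposal is correct and follows essentially the same route as the paper: the paper's proof of this theorem is precisely ``a direct application of Algorithm~\ref{algo:non_arithmetic}'' to the displayed data (MacLane step, the added lines $L_9,L_{10}$ via Remark~\ref{rmk:no_reduced_plinth}, the plinth $\Psi_2$, the reducible $\Delta_{\Psi_2}$, and the two splitting polygons), with the assembly of rigidity, Theorem~\ref{thm:splitting_polygons}, Lemma~\ref{lem:reduced_polynomial}, and the Galois argument carried out once and for all in the correctness proof of the algorithm. Your more explicit verification checklist, including the genericity check of condition~(3) and the field/Galois bookkeeping, is exactly what that application entails.
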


\begin{remark}
	We applied Algorithm~\ref{algo:non_arithmetic} with brut force approach --i.e. without considering the symmetries of the line combinatorics of the MacLane arrangement. It generates 83,320 plinths of length $3$. Note that we removed the plinths having concurrent lines in their supports since they always produce polynomial $\Delta_\Psi$ of degree at most~$1$. From these plinths, we produce $42$~nonarithmetic pairs defined over $\QQ(\zeta_3)$, up to lattice isomorphism.
\end{remark}

\subsection{A real nonarithmetic pair}

To construct a real nonarithmetic pair, we use the Falk-Sturmfels arrangements $\FS_1$ and $\FS_2$ defined over $\QQ(\sqrt{5})$ by the following equations, where $\phi=\frac{-1\pm\sqrt{5}}{2}$:
\begin{equation*}
	\begin{array}{lll}
		L_1: z = 0, \quad & L_2: x = 0, \quad &  L_3: x - z = 0, \\
		L_4: y = 0, \quad &  L_5: y - z = 0, \quad &  L_6: x - y = 0, \\
		L_7: x + \phi y - z = 0, \quad &  L_8: \phi x - \phi y + z = 0, \quad &  L_9: -\phi x + (\phi - 1)y = 0.
	\end{array}
\end{equation*}

By~\cite[Section~3.2.1]{Gue:splitting_polygons}, they can be constructed using the splitting polygon structure, with an irreducible polynomial $\Delta_{\Psi_1}$. So Steps~(1)-(4) in Algorithm~\ref{algo:non_arithmetic} are already done. Similarly to the previous example, there is no reduced plinth on $\C(\FS_1)$, so according to Remark~\ref{rmk:no_reduced_plinth}, we can add a line $L_{10}$ through the singular points $L_1 \cap L_2 \cap L_3$ and $L_6 \cap L_7$. It is defined by the equation $L_{10}: x - \phi z = 0$.

We consider the plinth $\Psi_2$ given by the support $(L_1, L_2, L_5)$ and pivot points $(L_5 \cap L_7, L_4 \cap L_{10}, L_4 \cap L_8)$. The polynomial $\Delta_{\Psi_2}$ is $(-\phi - 2) (\lambda + \phi)  (\lambda - \phi + 2)$, and the associated splitting polygons are:
\begin{align*}
	& L^1_{11}: (\phi + 2)x - (\phi + 1)y + \phi z = 0, \quad  L^1_{12}: (\phi - 1)x - \phi y + (2\phi - 1)z = 0, \\
	& L^1_{13}: (-2\phi + 1)x + (-3\phi + 2)y + (\phi - 1)z = 0,
\end{align*}
and
\begin{align*}
	& L^2_{11}: (\phi + 2)x - (\phi + 3)y + (\phi + 2)z = 0, \quad  L^2_{12}: -(\phi + 1)x + (\phi - 2)y + z = 0, \\
	& L^2_{13}: -x + (-\phi + 2)y - (\phi + 1)z = 0.
\end{align*}

Using Algorithm~\ref{algo:non_arithmetic}, one can deduce the next theorem.

\begin{theorem}\label{thm:real_nonarithmetic_pair}
	The arrangements $\FS_1\cup\{L_{10}\}\cup\{L^1_{11},L^1_{12},L^1_{13}\}$ and $\FS_1\cup\{L_{10}\}\cup\{L^2_{11},L^2_{12},L^2_{13}\}$ form a real nonarithmetic pair defined over the $\QQ(\sqrt{5})$.
\end{theorem}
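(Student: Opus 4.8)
The proof strategy is to verify that all the hypotheses of Algorithm~\ref{algo:non_arithmetic} are met by the explicit data provided, so that its conclusion applies verbatim. First I would check that $\FS_1$ satisfies the input requirement of Step~(1)--(4): by the cited reference~\cite[Section~3.2.1]{Gue:splitting_polygons}, $\FS_1$ is a rigid arrangement defined over $\QQ(\sqrt5)$ obtained from a rational rigid arrangement by adding a splitting polygon on a plinth $\Psi_1$ with irreducible $\Delta_{\Psi_1}$; thus $\FS_1$ plays the role of $\A_{\Psi_1}^1$. Next I would address the fact that there is no reduced plinth directly on $\C(\FS_1)$, so we invoke Remark~\ref{rmk:no_reduced_plinth} and adjoin the line $L_{10}\colon x-\phi z=0$ through the singular points $L_1\cap L_2\cap L_3$ and $L_6\cap L_7$; one must confirm $L_{10}$ indeed passes through both (a two-line incidence check) and note that, as stated in the remark, adding it preserves both the definition field $\QQ(\sqrt5)$ and rigidity.

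The core of the verification is Steps~(6)--(8) applied to the plinth $\Psi_2$ with support $(L_1,L_2,L_5)$ and pivot points $(L_5\cap L_7,\ L_4\cap L_{10},\ L_4\cap L_8)$. I would first check that $\Psi_2$ is a genuine plinth of length $3$ on $\C(\FS_1\cup\{L_{10}\})$, i.e.\ that the support/pivot incidence conditions of Definition~\ref{def:plinth} hold: $L_1\notin L_5\cap L_7$ and $L_2\notin L_5\cap L_7$, and similarly for the other two pivot points. Then I would compute $\Delta_{\Psi_2}$ following the construction in Section~\ref{sec:polynomial_Delta}: parametrize a generic point $Q_1^\lambda$ of $L_1$, form $E_1^\lambda$ through $Q_1^\lambda$ and the first pivot point, iterate to get $E_2^\lambda, E_3^\lambda$, and take the determinant of the coefficient matrix of $L_1$, $E_1^\lambda$, $E_3^\lambda$; this should yield the displayed factorization $(-\phi-2)(\lambda+\phi)(\lambda-\phi+2)$, which is reducible over $\QQ(\sqrt5)$ with the two rational-in-$\phi$ roots $\lambda_1=-\phi$ and $\lambda_2=\phi-2$. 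Since $\Delta_{\Psi_2}$ is reducible, Lemma~\ref{lem:reduced_polynomial} supplies a nonsplitting polygon and guarantees $\deg\Delta_{\Psi_2}=2$, and Proposition~\ref{propo:rigid} makes $\Psi_2$ a rigid projective system of $\M(\C)^{\FS_1\cup\{L_{10}\}}$ because that arrangement is rigid.

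The remaining and most delicate step is Step~(7): I must check that for each root $\lambda_j$ the tuple $E^{\lambda_j}$ genuinely forms a \emph{splitting} polygon on $\Psi_2$ — i.e.\ not merely that condition~(1) holds (which is automatic since $\Delta_{\Psi_2}(\lambda_j)=0$) but also that condition~(3), genericity of all other intersections, is satisfied. Concretely this means verifying that the lines $L^1_{11},L^1_{12},L^1_{13}$ (for $\lambda_1$) and $L^2_{11},L^2_{12},L^2_{13}$ (for $\lambda_2$) are pairwise distinct from each other and from all lines of $\FS_1\cup\{L_{10}\}$, and that the only non-generic incidences they create are exactly the prescribed ones from conditions~(1)--(2); this is the main obstacle, being a finite but sizable list of determinant non-vanishing checks over $\QQ(\sqrt5)$. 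Once condition~(3) is confirmed for both roots, Theorem~\ref{thm:splitting_polygons} applied to $\Psi_2$ shows the two arrangements lie in distinct connected components of $\M(\C_{\Psi_2})$, hence form a pair. Finally, reducibility of $\Delta_{\Psi_2}$ over $\QQ(\sqrt5)$ gives $\FF(\FS_1\cup\{L_{10}\}\cup\{L^i_{11},L^i_{12},L^i_{13}\})=\QQ(\sqrt5)$, and since the nontrivial $\sigma\in\Gal(\QQ(\sqrt5)/\QQ)$ fixes $\FS_1\cup\{L_{10}\}$ line-by-line while sending the root $-\phi$ to $-\bar\phi$ rather than to $\phi-2$, we get $\sigma\cdot(\FS_1\cup\{L_{10}\}\cup\{L^1_{11},L^1_{12},L^1_{13}\})\neq \FS_1\cup\{L_{10}\}\cup\{L^2_{11},L^2_{12},L^2_{13}\}$, so the pair is nonarithmetic; since the coefficients are real, it is a real nonarithmetic pair, as claimed.
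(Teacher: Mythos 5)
Your overall strategy is the same as the paper's: the theorem is proved by running Algorithm~\ref{algo:non_arithmetic} on the given data, and your unpacking of Steps (5)--(8) (checking $\Psi_2$ is a plinth, computing and factoring $\Delta_{\Psi_2}$, and, crucially, verifying condition~(3) of Definition~\ref{def:splitting_polygons} for both roots before invoking Theorem~\ref{thm:splitting_polygons} together with Proposition~\ref{propo:rigid} and Lemma~\ref{lem:reduced_polynomial}) is exactly the content the paper delegates to the algorithm. However, your final nonarithmeticity step contains a genuine error: you claim that the nontrivial $\sigma\in\Gal(\QQ(\sqrt5)/\QQ)$ \emph{fixes $\FS_1\cup\{L_{10}\}$ line-by-line}. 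It does not. The lines $L_7,L_8,L_9$ of $\FS_1$ and the added line $L_{10}\colon x-\phi z=0$ all involve $\phi$, and $\sigma(\phi)=\bar\phi=-1-\phi$, so $\sigma\cdot\FS_1=\FS_2\neq\FS_1$ (this is precisely why the Falk--Sturmfels arrangements form an \emph{arithmetic} pair) and $\sigma\cdot L_{10}\neq L_{10}$. Consequently your argument ``$\sigma$ sends the root $-\phi$ to $-\bar\phi$ rather than to $\phi-2$, hence $\sigma\cdot\A_1\neq\A_2$'' rests on a false premise: once the base is not $\sigma$-fixed, $\sigma\cdot E_i^{\lambda_1}$ is not the line $E_i^{\sigma(\lambda_1)}$ attached to the same base and plinth, so comparing roots of $\Delta_{\Psi_2}$ proves nothing as written.

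The paper's proof of Algorithm~\ref{algo:non_arithmetic} argues in the opposite direction, and that is the argument you need here: since $\sigma\cdot\FS_1=\FS_2$, the conjugate arrangement $\sigma\cdot\bigl(\FS_1\cup\{L_{10}\}\cup\{L^1_{11},L^1_{12},L^1_{13}\}\bigr)$ contains the $\FS_2$-lines (for instance $x+\bar\phi y-z$ and $x-\bar\phi z$), and one checks these occur nowhere among the thirteen lines of $\FS_1\cup\{L_{10}\}\cup\{L^2_{11},L^2_{12},L^2_{13}\}$; hence $\sigma\cdot\A_1\neq\A_2$, and since the only restrictions to $\QQ(\sqrt5)$ of Galois automorphisms of any larger field are the identity and $\sigma$, the pair is nonarithmetic. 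With that replacement (a short, finite incidence check over $\QQ(\sqrt5)$), the rest of your verification stands and the proof goes through.
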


\begin{remark}
	In the application of Algorithm~\ref{algo:non_arithmetic}, the brut force approach generates $83,166$ plinths of length $3$ and produces $91$ nonarithmetic pairs, up to lattice isomorphism.
\end{remark}

\subsection{A rational pair}

To complete our set of examples, we now apply Algorithm~\ref{algo:rational} to construct a rational pair.  Consider the rational arrangement $\A$ defined by the lines
\begin{equation*}
	\begin{array}{lll}
		L_1: 2y - z = 0, \quad &   L_2: x - y = 0, \quad &  L_3: x + y - z = 0, \\
		L_4: x = 0, \quad &  L_5: 2x - 2y + z = 0, \quad &  L_6: x - z = 0, \\
		L_7: 2x + 6y - 5z = 0,   \quad & L_8: y = 0, \quad &  L_9: y - z = 0, \\
		L_{10}: z = 0.
	\end{array}
\end{equation*}
Let $\Psi$ be the plinth with support $(L_1, L_2, L_7)$ and pivot points $(L_3 \cap L_6 \cap L_8, L_1 \cap L_8 \cap L_9 \cap L_{10}, L_5 \cap L_6)$. The polynomial $\Delta_\Psi$ is $(-6)(\lambda - 5/2) (\lambda - 2/3)$. The two associated splitting polygons are:
\begin{equation*}
	L^1_{11}: 3x + 2y - 3z = 0, \quad  L^1_{12}: 5y - 3z = 0, \quad  L^1_{13}: 6x - 2y - 3z = 0.
\end{equation*}
and
\begin{equation*}
	L^2_{11}: x - 3y - z = 0, \quad  L^2_{12}: 2y + z = 0, \quad  L^2_{13}: 4x + 6y - 13z = 0.
\end{equation*}

By Algorithm~\ref{algo:rational}, one has the following.

\begin{theorem}\label{thm:rational_nonarithmetic_pair}
	The arrangements  $\A\cup\{L^1_{11},L^1_{12},L^1_{13}\}$ and $\A\cup\{L^2_{11},L^2_{12},L^2_{13}\}$ form a rational pair.
\end{theorem}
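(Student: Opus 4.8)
The plan is to verify that this is a direct instance of Algorithm~\ref{algo:rational}, so the proof reduces to checking the hypotheses of that algorithm for the specific arrangement $\A$, plinth $\Psi$, and polynomial $\Delta_\Psi$ given in the statement. First I would confirm that $\A$ is a rigid arrangement defined over $\QQ$; rigidity means $\dim_\CC \M(\C(\A))^\A = 0$, which one checks by setting up the defining equations of the moduli space from the incidence constraints of the $10$ listed lines and verifying the solution is isolated (equivalently, counting parameters against normalization by $\PGL_3$ and the number of independent incidence conditions). Since all coefficients are integers, $\FF(\A) = \QQ$ is immediate. By Proposition~\ref{propo:rigid}, rigidity guarantees that $\Psi$ is a rigid projective system of $\M(\C(\A))^\A$, which is the crucial hypothesis feeding into Theorem~\ref{thm:splitting_polygons}.

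Next I would verify that $\Psi$ is genuinely a plinth on $\C(\A)$: with support $(L_1, L_2, L_7)$ and pivot points $(P_1, P_2, P_3) = (L_3 \cap L_6 \cap L_8,\ L_1 \cap L_8 \cap L_9 \cap L_{10},\ L_5 \cap L_6)$, Definition~\ref{def:plinth} requires $S_i \notin P_i$ and $S_{i+1} \notin P_i$ for each $i$ modulo $3$. Concretely this means checking $L_1, L_2 \notin \{L_3, L_6, L_8\}$, then $L_2, L_7 \notin \{L_1, L_8, L_9, L_{10}\}$, then $L_7, L_1 \notin \{L_5, L_6\}$ — all of which hold by inspection of the index sets. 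Then I would recompute $\Delta_\Psi$ following the construction in Section~\ref{sec:polynomial_Delta}: parametrize a generic point $Q_1^\lambda$ on $L_1$ distinct from $L_1 \cap L_2$, form $E_1^\lambda$ through $Q_1^\lambda$ and $P_1$, intersect with $L_2$ to get $Q_2^\lambda$, form $E_2^\lambda$ through $Q_2^\lambda$ and $P_2$, intersect with $L_7$ to get $Q_3^\lambda$, form $E_3^\lambda = E_r^\lambda$ through $Q_3^\lambda$ and $P_3$, and take $\Delta_\Psi$ to be the $3\times 3$ determinant of the coefficient vectors of $L_1$, $E_1^\lambda$, and $E_3^\lambda$. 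The claim is that after clearing the parametrization this equals $(-6)(\lambda - 5/2)(\lambda - 2/3)$; in particular it has degree exactly $2$, it is reducible over $\QQ$, and it has the two distinct rational roots $\lambda_1 = 5/2$ and $\lambda_2 = 2/3$.

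The remaining and, as warned in the paragraph preceding Lemma~\ref{lem:reduced_polynomial}, genuinely necessary step is to verify that for each root $\lambda_i$ the tuple $E^{\lambda_i} = (E_1^{\lambda_i}, E_2^{\lambda_i}, E_3^{\lambda_i})$ — which the statement asserts equals $(L^i_{11}, L^i_{12}, L^i_{13})$ — actually forms a \emph{splitting} polygon on $\Psi$, not merely a tuple satisfying conditions (1) and (2) of Definition~\ref{def:splitting_polygons}. Conditions (1) and (2) are automatic from the construction (condition (1) because $\Delta_\Psi(\lambda_i) = 0$ is exactly the closure condition $E_r^{\lambda_i} \cap S_1 = Q_1^{\lambda_i}$, and condition (2) because each $E_i^\lambda$ is built to pass through $P_i$). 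The content is condition (3): \emph{all other intersections are generic}, i.e. no unintended concurrences appear among the $E^{\lambda_i}$ and the original thirteen... rather, the original ten lines. This is where Lemma~\ref{lem:reduced_polynomial} only gives us a \emph{nonsplitting} polygon for free, and one must check by hand, for each of the two values $\lambda_i = 5/2$ and $\lambda_i = 2/3$ and each of the equations $L^i_{11}, L^i_{12}, L^i_{13}$ displayed, that the configuration $\A \cup E^{\lambda_i}$ has exactly the combinatorics $\C_\Psi$ and no degenerations. I expect this genericity check to be the main obstacle, being a finite but tedious enumeration of all triples of lines among the sixteen... thirteen lines of $\A \cup E^{\lambda_i}$ that could be unexpectedly concurrent; it is the only place where the explicit numerical coefficients really matter.

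Once rigidity of $\A$, the plinth axioms for $\Psi$, the factorization $\Delta_\Psi = (-6)(\lambda - 5/2)(\lambda - 2/3)$, and the splitting property of both $E^{5/2}$ and $E^{2/3}$ are in hand, Theorem~\ref{thm:splitting_polygons} applies with $E^{\lambda_1}$ and $E^{\lambda_2}$ the two distinct splitting polygons and the nonsplitting polygon supplied by Lemma~\ref{lem:reduced_polynomial}, yielding that $\A \cup \{L^1_{11}, L^1_{12}, L^1_{13}\} = \A_\Psi^{\lambda_1}$ and $\A \cup \{L^2_{11}, L^2_{12}, L^2_{13}\} = \A_\Psi^{\lambda_2}$ lie in distinct connected components of $\M(\C_\Psi)$, i.e. form a pair. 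Finally, since the roots $\lambda_i$ and all coefficients of the new lines are rational, both arrangements have definition field $\QQ$, so by Definition~\ref{def:pairs} the pair is rational, which completes the proof.
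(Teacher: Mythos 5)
Your proposal is correct and follows essentially the same route as the paper: the paper proves this theorem simply by invoking Algorithm~\ref{algo:rational} (whose validity rests on Proposition~\ref{propo:rigid}, Lemma~\ref{lem:reduced_polynomial} and Theorem~\ref{thm:splitting_polygons}), and your plan is precisely a spelled-out verification of that algorithm's hypotheses for this arrangement — rigidity of $\A$, the plinth axioms for $\Psi$, the factorization of $\Delta_\Psi$ over $\QQ$, the genericity (splitting) check for both roots, and rationality of the definition fields. You also correctly single out condition~(3) of Definition~\ref{def:splitting_polygons} as the non-automatic step, matching the paper's own caveat preceding Lemma~\ref{lem:reduced_polynomial}.
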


\begin{remark}
	In the application of Algorithm~\ref{algo:rational}, the brut force approach generates $102,517$ plinths of length $3$, and produces $116$ other rational pairs, up to lattice isomorphism.
\end{remark}

\section{Discussion}\label{sec:discussion}

\subsection{The algorithms}

The algorithms presented in this paper have proven to be effective, as demonstrated by the numerous examples of nonarithmetic pairs generated. However, there are limitations that warrant further investigation. One significant area for improvement is the generation of plinths. Currently, the process enumerates all possible plinths using a force brut approach, which can be computationally intensive for arrangements with a high number of lines. Taking advantage of the symmetries of the arrangement could streamline this process, reducing redundancy and improving efficiency. Additionally, the question arises whether there exists combinatorial conditions on plinths that consistently produce nonarithmetic pairs. Identifying such conditions could not only enhance our understanding of the underlying geometric structures but also lead to more targeted and efficient algorithms for generating nonarithmetic pairs.

\subsection{Nonarithmetic pairs with at most $12$ lines}

The Rybnikov arrangements and the rational Zariski pairs given in~\cite{GBVS:configurations} each consist of at least $13$ lines. Notably, all the examples produced in this paper also contain $13$ lines. In our process, it appeared to be necessary to add some lines due to the absence of a reduced plinth. Consequently, we were unable to construct nonarithmetic pairs with fewer than $13$ lines. This observation naturally leads to the following problem.
\begin{problem}\label{pb:nonarithmetic}
	Construct a pair with at most $12$ lines which is not lattice isotopic to a nonarithmetic pair or prove that such a pair does not exist.
\end{problem}

\subsection{Topology of the examples}

In~\cite{Gue:splitting_polygons}, the author constructed several arithmetic pairs by successively applying two splitting triangles, akin to Algorithm~\ref{algo:non_arithmetic}. Some of these pairs form Zariski pairs with nonisomorphic fundamental groups. In contrast, the topology of all examples constructed in the present paper cannot be distinguished using the truncated Alexander test~\cite{ACCM:Rybnikov,ACGM}. The author also examined several other topological invariants, such as the loop-linking numbers~\cite{Cad:PhD,Gue:LLN}, the torsion of the lower central series factors~\cite{ArtGueViu}, and the torsion in the first Chen groups. None of these tests could demonstrate that the examples have nonhomeomorphic embedded topology.

\begin{problem}\label{pb:ZP}
	Determine if the examples presented in this paper have nonhomeomorphic embedded topology.
\end{problem}

%
%

\bibliographystyle{plain}
\bibliography{biblio}

\end{document}